\def\qed{\hfill {\hbox{${\vcenter{\vbox{               
   \hrule height 0.4pt\hbox{\vrule width 0.4pt height 6pt
   \kern5pt\vrule width 0.4pt}\hrule height 0.4pt}}}$}}}
\newtheorem{theorem}{Theorem}
\newtheorem{definition}{Definition}
\newenvironment{proof}[1][Proof]{\smallskip\noindent{\bf #1.}\quad}{\qed\par\medskip}
\date{}
\title{\Large \textbf{The Signed Weighted Resolution Set is Not a Complete Pseudoknot Invariant}}
\author{
Allison Henrich\footnote{henricha@seattleu.edu, Seattle University, Seattle, WA 98122, United States}\hspace{1cm}
Slavik Jablan\footnote{sjablan@gmail.com, The Mathematical Institute, Belgrade, 11000, Serbia}\hspace{1cm}
Inga Johnson\footnote{ijohnson@willamette.edu, Willamette University, Salem, OR 97301, United States}
}
\begin{document}

\maketitle

\begin{abstract}
When the signed weighted resolution set was defined as an invariant of pseudoknots, it was unknown whether this invariant was complete. Using the Gauss-diagrammatic invariants of pseudoknots introduced by Dorais et al, we show that the signed were-set cannot distinguish all non-equivalent pseudoknots. This goal is achieved through studying the effects of a flype-like local move on a pseudodiagram.
\end{abstract}
\bigskip

Mathematics Subject Classification 2010: 57M27\\

Keywords: pseudoknot, were-set

\section{Introduction}
\subsection{Pseudodiagrams, Pseudoknots and Gauss Diagrams}

Pseudodiagrams are knot or link diagrams where some of the crossing information is missing.  Where there is missing information, instead of a crossing with clearly marked over- and under-strands, a {\em precrossing} or double-point of the curve appears in the diagram.   Pseudodiagrams of spatial graphs, knots and links were first introduced as potential models for biological applications by Hanaki~\cite{hanaki}. Pseudoknots were first defined in~\cite{pseudoknot} as equivalence classes of pseudodiagrams up to rigid vertex isotopy and a collection of natural Reidemeister moves.  This collection of moves includes the classical Reidemeister (R) moves and a number of additional pseudo-Reidemeister (PR) moves as seen in Figure~\ref{Rmoves}.

\begin{figure}[htbp]
\begin{center}
\includegraphics[scale=.75]{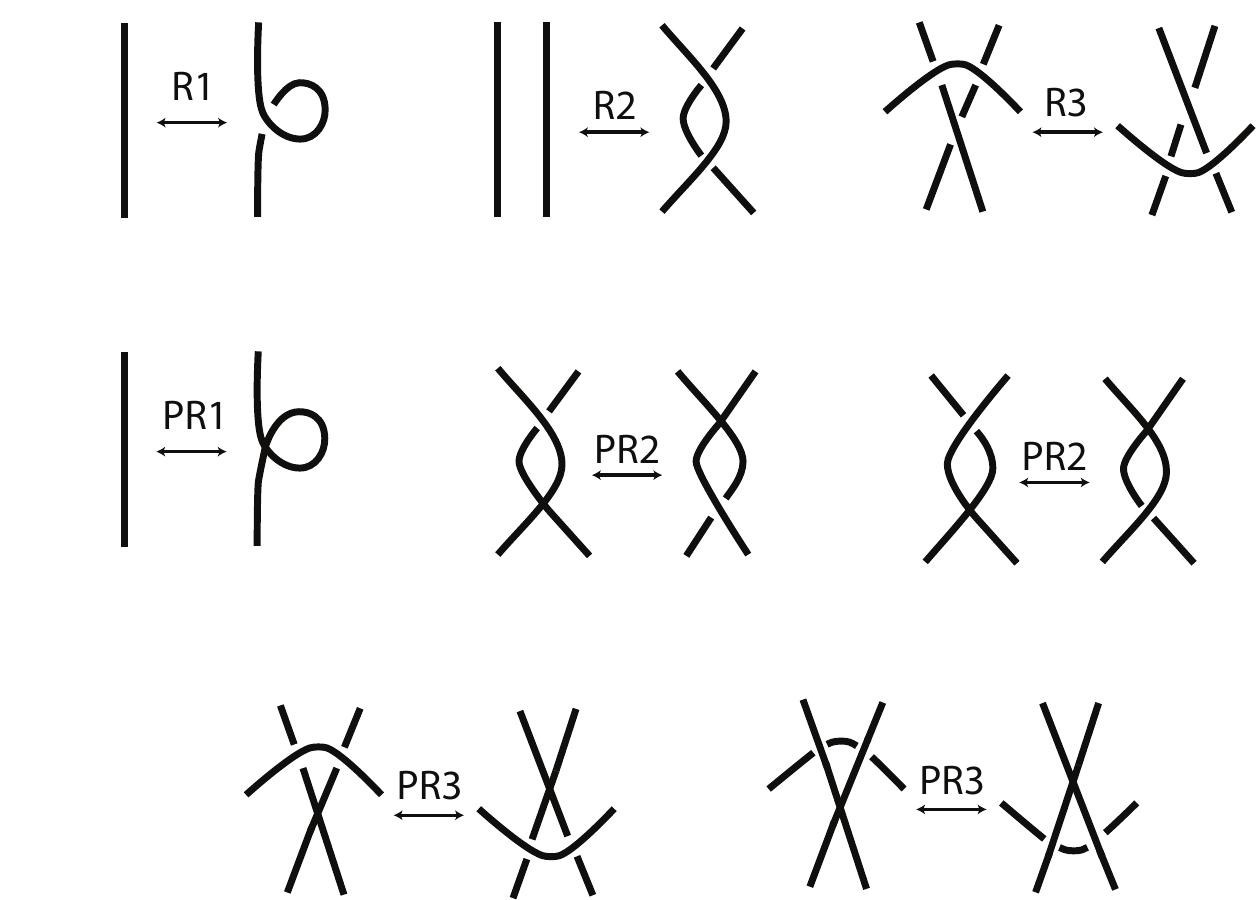}
\caption{{\bf Classical and Pseudo-Reidemeister moves}}
\label{Rmoves}
\end{center}
\end{figure}

The most familiar representation of a given knot is that of a knot diagram which is a shadow of the knot decorated with crossing information at transverse intersection points. An alternate and sometimes more useful presentation is a Gauss diagram which consists of a core circle oriented counterclockwise (drawn to represent the entire curve of the oriented knot) together with the pre-image of double-points connected by chords along the circle.  The crossing information is indicated on the chord by an arrow pointing from the over-strand to the under-strand and a sign on the chord specifying whether the crossing is left or right handed.  See Figure~\ref{G-example}.

\begin{figure}[htbp]
\begin{center}
\includegraphics[scale=.6]{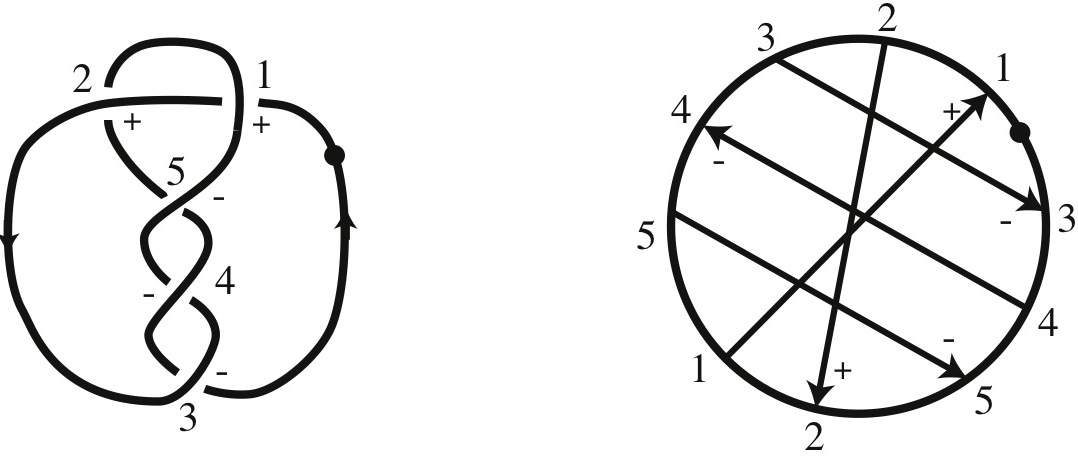}
\caption{{\bf A knot diagram and its corresponding Gauss diagram}}
\label{G-example}
\end{center}
\end{figure}

In~\cite{gauss}, the definition of a Gauss diagram was extended to pseudoknots as follows.  All classical crossings in a pseudoknot are represented in the Gauss diagram by a standard chord decorated with an arrow and a crossing sign.  A precrossing is represented by a bold or thicker chord.  We must take care, however, to indicate the proper `handedness' of the precrossing.  Figure~\ref{precrossing-ab} indicates a general precrossing and its decorated arrow within the Gauss diagram, and Figure~\ref{pseudoG-example} gives an example of a pseudoknot and its Gauss diagram.  Notice that in the Gauss diagram, the precrossing arrow points in the same direction as the classical arrow would point if the precrossing were resolved positively.

 \begin{figure}[htbp]
\begin{center}
\includegraphics[scale=0.6]{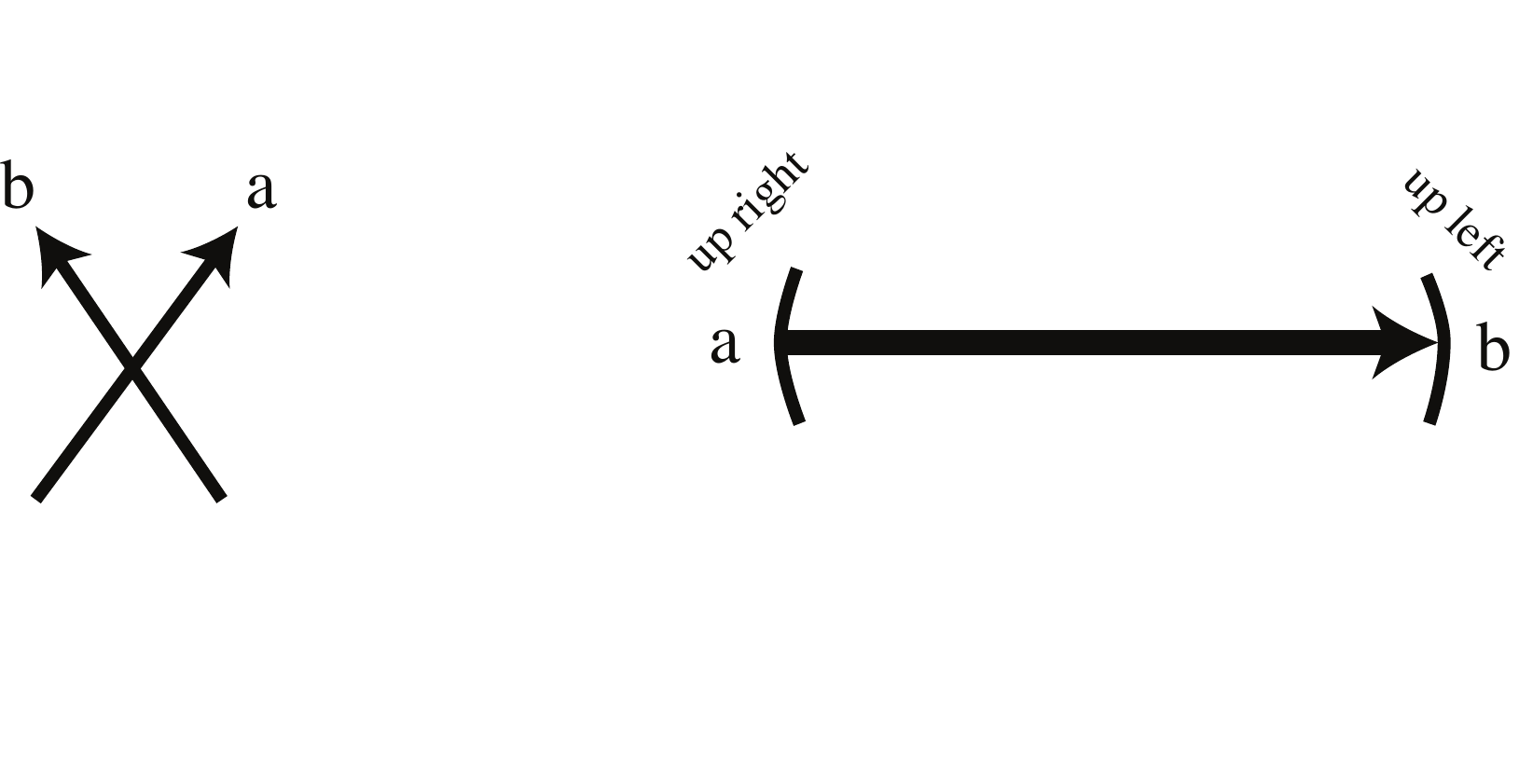}

\vspace{-.7in}
\caption{{\bf A precrossing and a subsection of its Gauss diagram}}
\label{precrossing-ab}
\end{center}
\end{figure}

 \begin{figure}[htbp]
\begin{center}
\includegraphics[scale=.6]{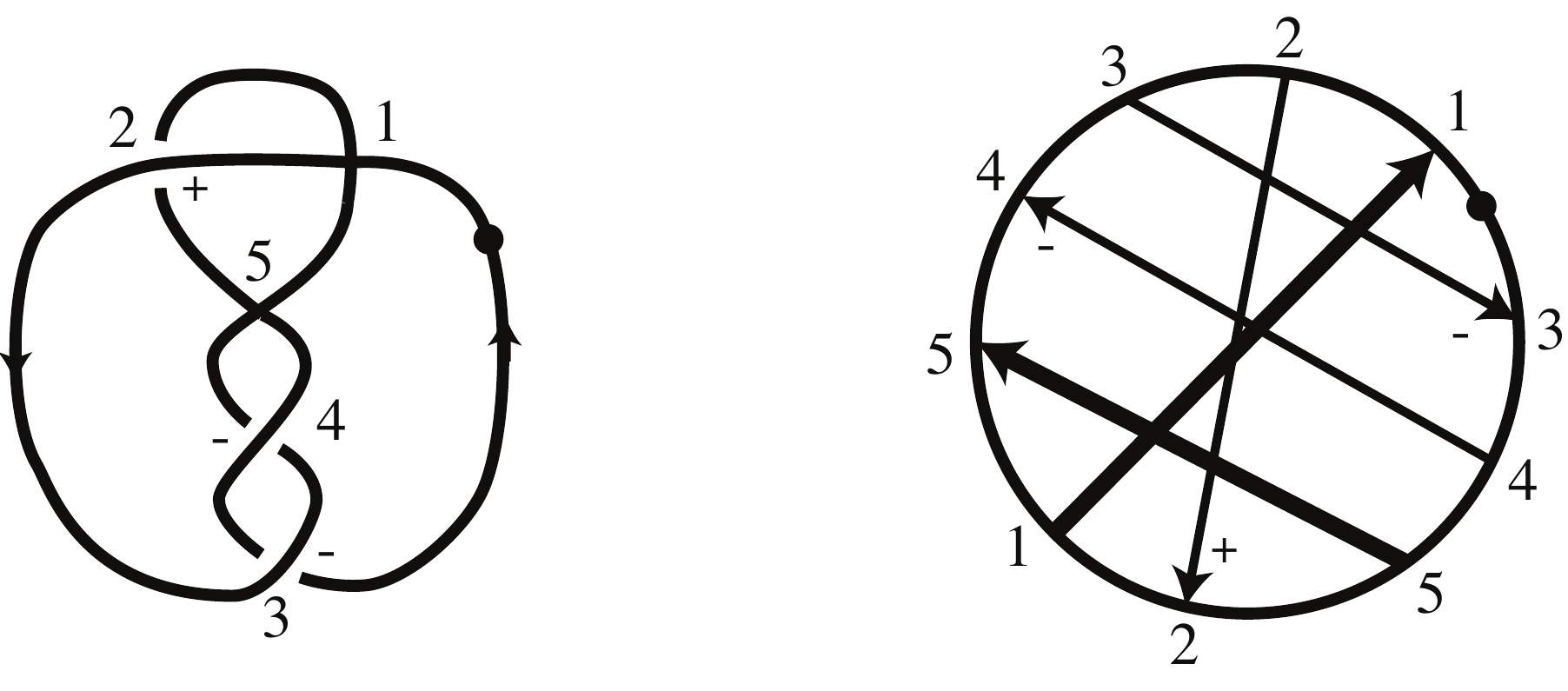}
\caption{{\bf A pseudoknot diagram and its corresponding Gauss diagram}}
\label{pseudoG-example}
\end{center}
\end{figure}

We refer the reader to~\cite{gauss} for illustrations of Gauss-diagrammatic versions of the R and PR moves. We note that not all Gauss diagrams correspond to pseudoknots, but the set of Gauss diagrams corresponds to a broader collection of virtual pseudoknots.

\subsection{Invariants of Pseudoknots}

There are two powerful pseudoknot invariants we focus our attention on in this paper, the signed were-set, introduced in~\cite{pseudoknot}, and the Gauss-diagrammatic invariant $\mathcal{I}(P)$, defined in~\cite{gauss}. We will use the latter invariant to show that the former invariant is incomplete.

\begin{definition}  The {\em  signed weighted resolution set} (or {\em were-set}) of a pseudoknot $P$ is the set of ordered pairs $(K, p_K)$ where $K$ is the knot type of a resolution of $P$ and $p_K$ is the probability that $K$ is obtained from $P$ by a random choice of crossing information, assuming that positive and negative crossings are equally likely. In this set, knots and their mirror images are treated as distinct unless such a pair of knots are actually topologically equivalent. \end{definition}

\begin{definition}
Consider a Gauss diagram of a (virtual or classical) pseudoknot, $G$. Define a map $\mathcal{I}(G)$ as follows.
\begin{enumerate}
\item Replace with chords all arrows in $G$ that are associated to precrossings. (I.e., delete all arrowheads on precrossing arrows.) These chords will be called {\em prechords}.
\item Decorate each prechord $c$ with the integer value $i(c)$, where $i(c)$ is the sum of the signs of the classical arrows that intersect $c$.
\item Delete all classical arrows.
\item Delete any prechords $c$ that have adjacent endpoints and $i(c)=0$.
\end{enumerate}
The codomain of $\mathcal{I}$ is the set of all chord diagrams such that each chord is decorated with an integer. We refer to this set as $\mathbb{Z}\mathcal{C}$.
\end{definition}

We illustrate this definition with an example of a virtual pseudoknot $P$ and its corresponding decorated chord diagram $\mathcal{I}(P)$ in Figure~\ref{I_example}.

\begin{figure}[htbp]
\begin{center}
\includegraphics[scale=.25]{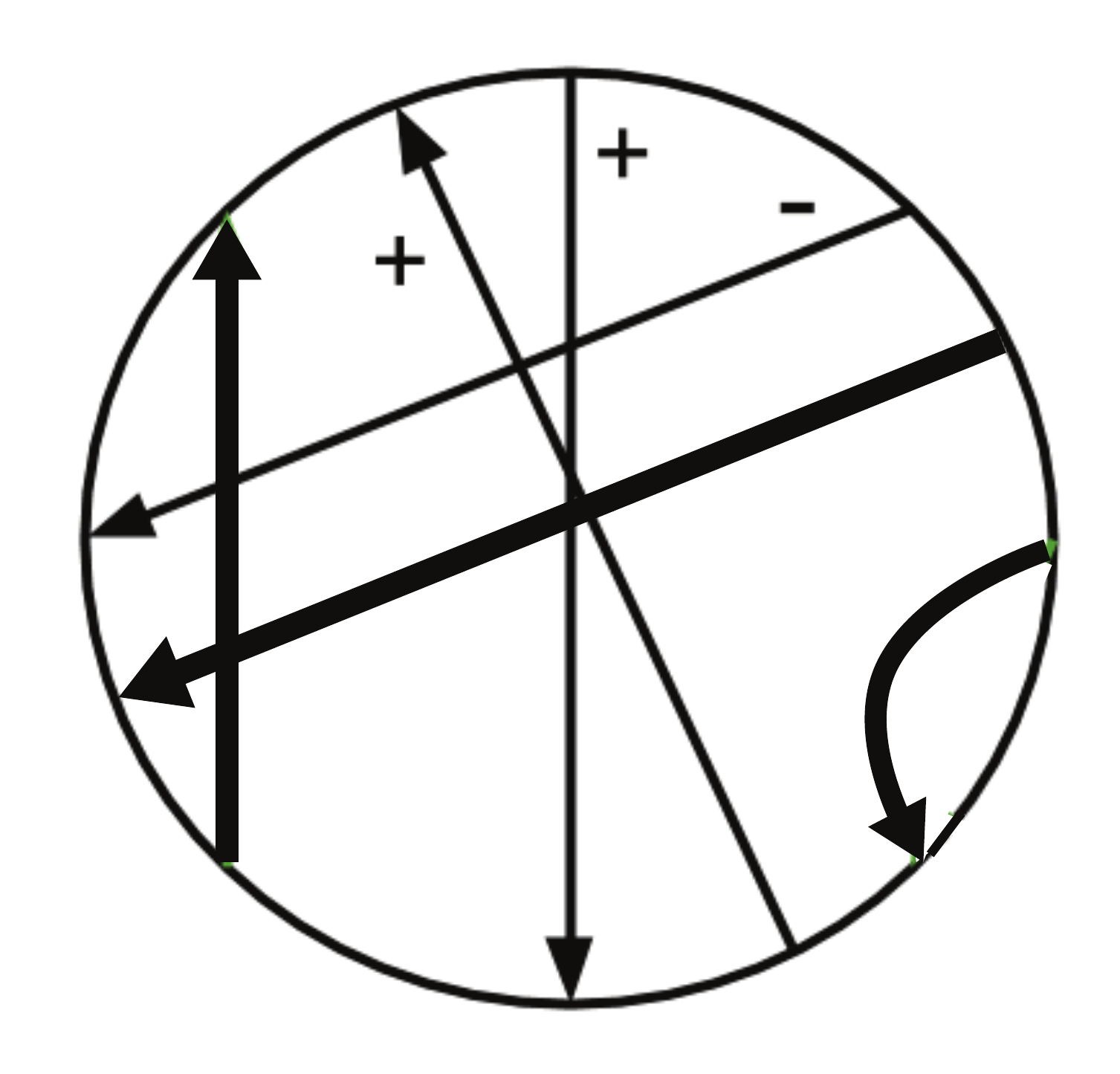}\hspace{1cm}\includegraphics[scale=.25]{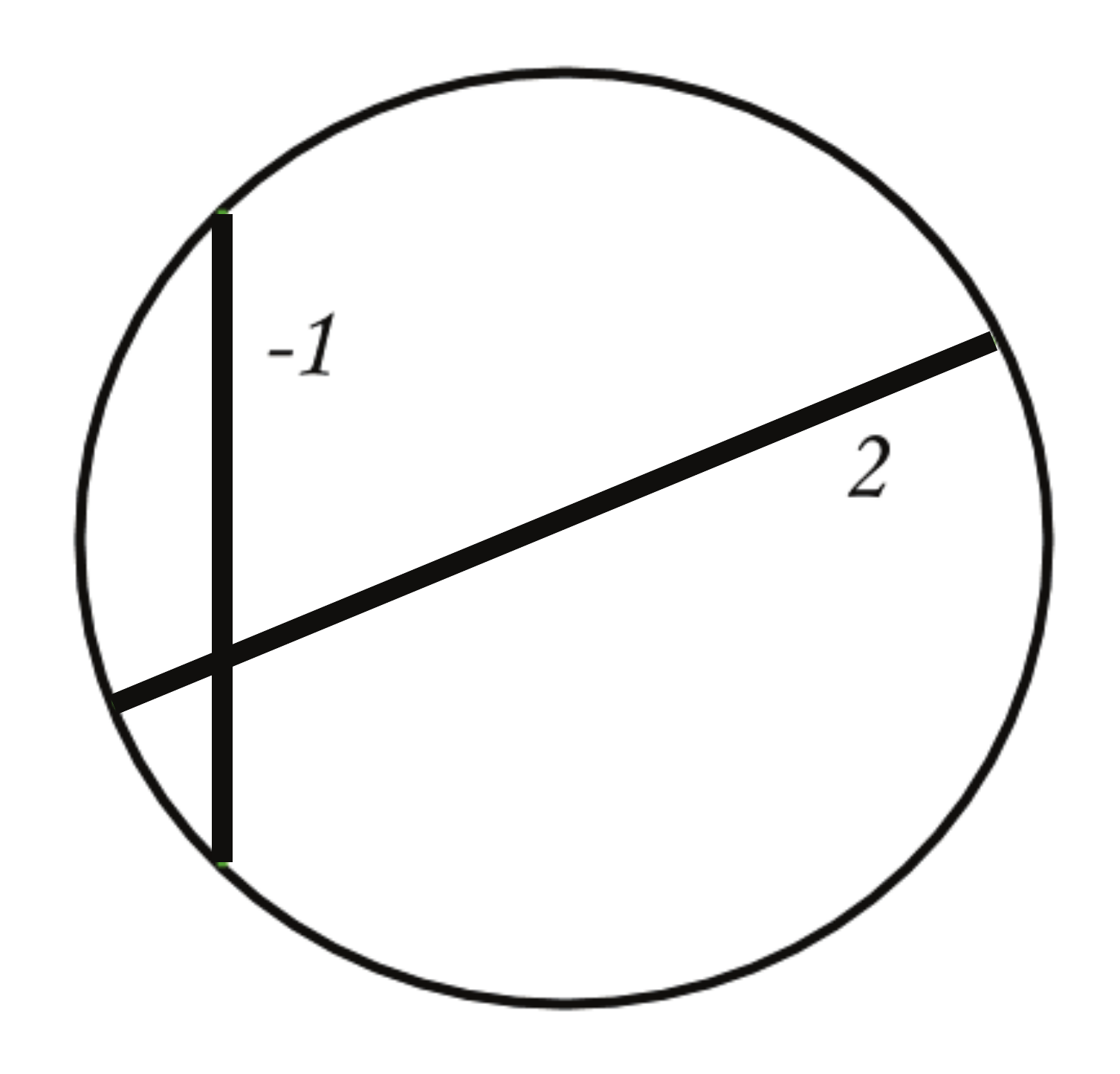}
\caption{{\bf A Gauss diagram for a virtual pseudoknot and its image under the map $\mathcal{I}$}}
\label{I_example}
\end{center}
\end{figure}

\section{Shadow Flypes and the Were-Set}
\subsection{The Were-Set is Incomplete}

To show that the were-set is incomplete, consider the pair of shadows that define two pseudoknots in Figure~\ref{counterexample}. We'll refer to these shadows as $P_1$ and $P_2$. 

\begin{figure}[htbp]
\begin{center}
\includegraphics[scale=.35]{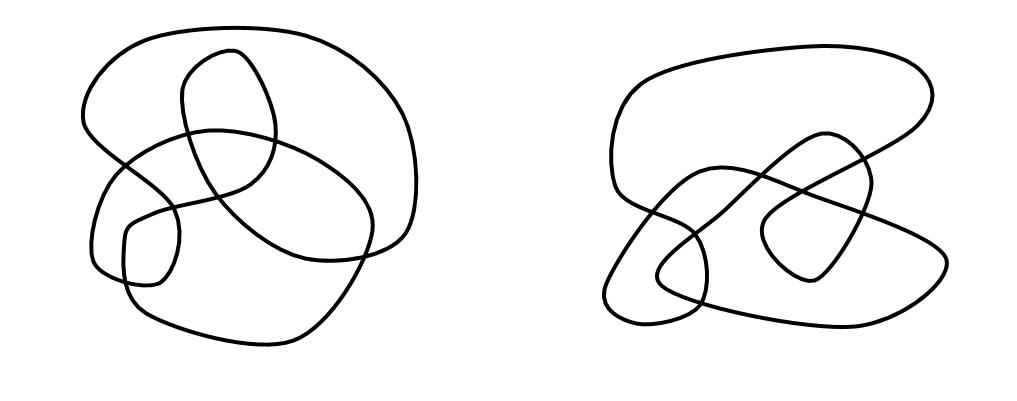}
\caption{{\bf A pair of nonequivalent pseudoknots, $P_1$ and $P_2$.}}
\label{counterexample}
\end{center}
\end{figure}

By construction, $P_1$ and $P_2$ are related by a move we will call a {\em shadow flype}, pictured in Figure~\ref{flype}. T is the shadow of a tangle in this figure. (This means that none of the crossings in the pseudotangle have been determined.)

\begin{figure}[htbp]
\begin{center}
\includegraphics[height=.8in]{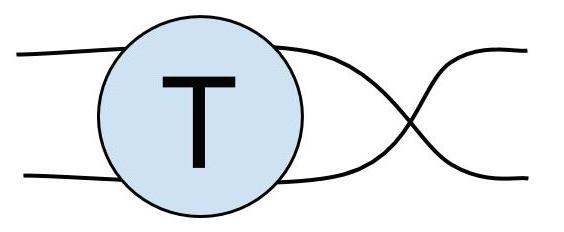}\includegraphics[height=.8in]{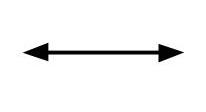}\includegraphics[height=.8in]{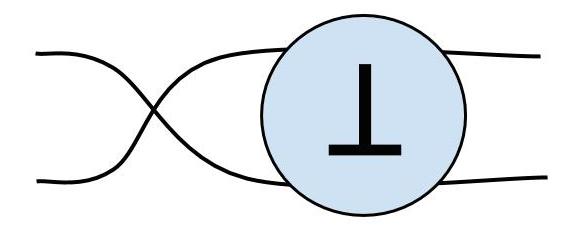}
\caption{{\bf A shadow flype.}}
\label{flype}
\end{center}
\end{figure}

In Figure~\ref{flypegauss}, we see the values of $\mathcal{I}$ for $P_1$ and $P_2$. Because any equivalences between the values of $\mathcal{I}$ must preserve the cyclic ordering of the endpoints of chords in the chord diagrams, the values of $\mathcal{I}$ are distinct. Thus, $P_1$ and $P_2$. are distinct.

\begin{figure}[htbp]
\begin{center}
\includegraphics[height=1.5in]{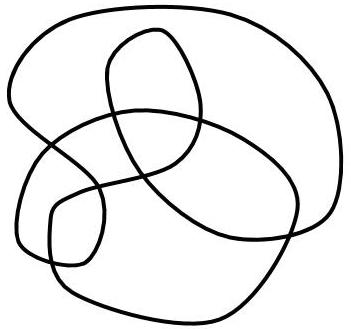}\includegraphics[height=1.7in]{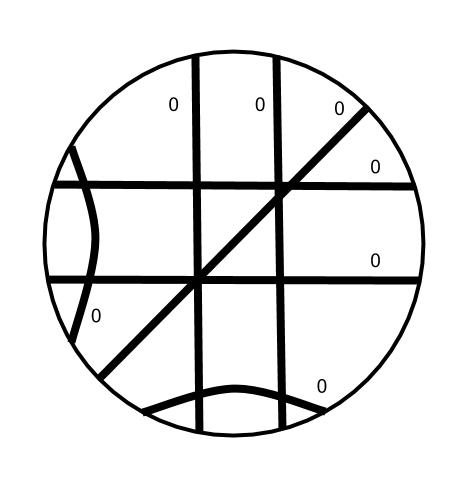}
\includegraphics[height=1.4in]{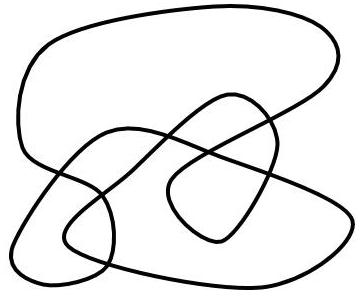}\includegraphics[height=1.65in]{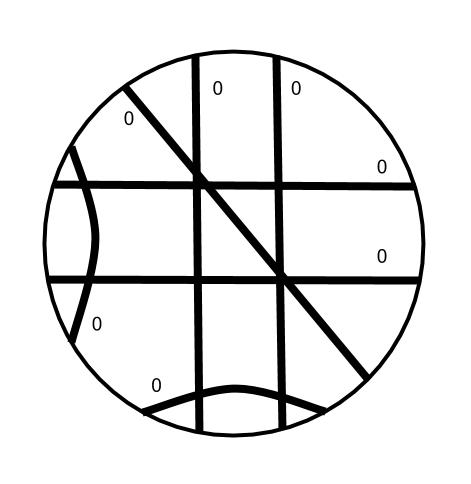}
\caption{{\bf Shadows $P_1$ and $P_2$ together with $\mathcal{I}(P_1)$ and $\mathcal{I}(P_2)$.}}
\label{flypegauss}
\end{center}
\end{figure}

What is interesting about this example is that for both pseudoknots, $P_1$ and
$P_2$, the signed were-sets are the same. Computed by {\em LinKnot}~\cite{4}, they are:

$$\{\{0_1,72\},\{-3_1,10\},\{3_1,10\},\{4_1,20\},\{-5_1,1\},\{-5_2,2\},$$ $$\{5_1,1\},\{5_2,2\},\{6_2,2\},\{6_1,2\},\{-6_1,2\},\{-6_2,2\},\{-7_7,1\},\{7_7,1\}\}.$$

(Note: the mirror image of knot $K$ is denoted by $-K$.)

In fact, it is true in general that the were-set is unchanged by the shadow flype.

\begin{theorem}\label{flype_thm} Suppose $P_1$ and $P_2$ are related by a local shadow flype move. Then the were-sets of $P_1$ and $P_2$ are identical.
\end{theorem}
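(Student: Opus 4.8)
The plan is to exhibit a probability-preserving bijection between the resolutions of $P_1$ and those of $P_2$ under which corresponding resolutions yield the same knot type. Since the were-set is precisely the pushforward of the uniform distribution on resolutions along the ``resulting knot type'' map, it will then agree for $P_1$ and $P_2$.

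First I would observe that a shadow flype neither creates nor destroys precrossings: it relocates the single flype precrossing to the opposite side of the tangle region and rotates the shadow tangle $T$ by $180^\circ$. Consequently the flype induces a natural bijection $\beta$ between the precrossings of $P_1$ and those of $P_2$; precrossings outside the flype disk are fixed, while the flype precrossing and the precrossings of $T$ are sent to their images under the move. Since a resolution is exactly an assignment of over/under information to each precrossing, $\beta$ induces a bijection $r \mapsto r' := r\circ\beta^{-1}$ between the $2^n$ resolutions of $P_1$ and the $2^n$ resolutions of $P_2$, where $n$ is the common number of precrossings. Because both resolution sets carry the uniform distribution (positive and negative crossings equally likely), $\beta$ is automatically probability-preserving.

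The heart of the argument is to show that for every resolution $r$ of $P_1$, the resolved classical diagram $D_1(r)$ and the resolved classical diagram $D_2(r')$ of $P_2$ represent the same knot. For this I would resolve first and flype second: starting from $D_1(r)$, every precrossing is now a genuine crossing, and applying the classical flype move inside the flype disk relocates the (now classical) flype crossing and rotates the (now classical) tangle $T$. The result, regarded as a shadow, is exactly $P_2$, and its crossing data is precisely $r$ transported by $\beta$, so it coincides with $D_2(r')$. Thus $D_2(r')$ is obtained from $D_1(r)$ by a single classical flype. Since a classical flype is realized by an orientation-preserving ambient isotopy of $S^3$ (rotating the tangle ball by $180^\circ$), it preserves the oriented knot type and in particular does not produce a mirror image; hence $D_1(r)$ and $D_2(r')$ are equivalent knots, including the distinction between $K$ and $-K$ recorded in the were-set. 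Combining this with the probability-preserving bijection of the previous paragraph, the distribution of knot types over resolutions is identical for $P_1$ and $P_2$.

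The step I expect to require the most care is verifying that the local picture genuinely realizes a classical flype for every choice of resolution, including both resolutions of the flype precrossing and all $2^{n-1}$ resolutions of the shadow tangle $T$. In particular I would check that the orientations induced on the strands passing through the tangle are handled correctly under the $180^\circ$ rotation, so that the move is an honest flype (and not a reflection) in each case. The remaining bookkeeping, namely that $\beta$ matches crossings consistently and that resolving before or after the flype yields the same diagram, is routine once the local model is pinned down.
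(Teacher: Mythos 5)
Your proposal is correct and follows essentially the same route as the paper: both arguments construct a bijection between resolutions of $P_1$ and $P_2$ under which corresponding resolved diagrams differ by a single classical flype, hence represent the same knot, so the (uniformly weighted) were-sets coincide. Your write-up simply makes explicit two points the paper leaves implicit---that the bijection is automatically probability-preserving, and that a flype, being realized by ambient isotopy, cannot swap a knot with its mirror image---which is a welcome refinement rather than a different proof.
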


\begin{proof} Suppose $P_1$ and $P_2$ are related by a local shadow flype move. Then there is a one-to-one correspondence between resolutions of $P_1$ and resolutions of $P_2$, where a knot diagram $D_1$ obtained by resolving precrossings of $P_1$ is paired with a resolution $D_2$ of $P_2$ such that $D_1$ and $D_2$ are related by a single (classical) flype. Thus, $D_1$ and $D_2$ represent equivalent knots. In effect, for any resolution of $P_1$, we can resolve the crossings of $P_2$ such that all crossings not involved in the shadow flype agree, and crossings involved in the shadow flype are chosen so that the resulting diagram is related to the resolution of $P_1$ by a classical flype. 
The existence of such a correspondence establishes our desired result.
\end{proof}

Theorem~\ref{flype_thm}, together with our example provide the proof of our main result:

\begin{theorem} The signed were-set is not a complete invariant of pseudoknots.
\end{theorem}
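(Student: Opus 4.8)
The plan is to prove incompleteness by exhibiting a single counterexample: a pair of pseudoknots that are genuinely distinct yet share the same signed were-set. By definition, a complete invariant must separate every pair of inequivalent pseudoknots, so producing one such pair suffices. I would take the pair $P_1$ and $P_2$ of Figure~\ref{counterexample}, which are related by a shadow flype, and establish the two required properties separately: that $P_1$ and $P_2$ are inequivalent, and that their were-sets coincide.

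For the equality of were-sets, I would invoke Theorem~\ref{flype_thm} directly. Since $P_1$ and $P_2$ differ by a local shadow flype, that theorem already guarantees their were-sets are identical, so this half of the argument requires no further computation, though the explicit \emph{LinKnot} computation displayed above confirms it in this instance.

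The substantive step is establishing that $P_1$ and $P_2$ are inequivalent, and here I would use the Gauss-diagrammatic invariant $\mathcal{I}$. I would compute the decorated chord diagrams $\mathcal{I}(P_1)$ and $\mathcal{I}(P_2)$ by following the four-step recipe of the definition: forming prechords, labeling each prechord by the signed count of classical arrows that cross it, deleting the classical arrows, and discarding trivial prechords. I would then argue that these two elements of $\mathbb{Z}\mathcal{C}$ are inequivalent. The feature to exploit is that any equivalence of decorated chord diagrams must respect the cyclic ordering of chord endpoints around the core circle; it therefore suffices to verify that no cyclic-order-preserving relabeling carries one diagram to the other. Because $\mathcal{I}$ is a pseudoknot invariant, distinctness of the images $\mathcal{I}(P_1)$ and $\mathcal{I}(P_2)$ forces distinctness of $P_1$ and $P_2$, and the theorem then follows immediately by combining the two halves.

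The main obstacle is choosing the shadow inside the flype tangle $T$ so that the flype actually alters the value of $\mathcal{I}$: a shadow flype need not change a pseudoknot, so the counterexample works only for a tangle whose precrossings interact with the flyped strand in a way that shifts the prechord decorations or their cyclic arrangement. Verifying that the specific diagrams of Figure~\ref{flypegauss} yield genuinely inequivalent decorated chord diagrams, and that this distinction is intrinsic rather than an artifact of how the chord diagram is drawn, is the delicate part of the argument.
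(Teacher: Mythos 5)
Your proposal is correct and follows essentially the same route as the paper: exhibit the shadow-flype pair $P_1$, $P_2$, use Theorem~\ref{flype_thm} (confirmed by the \emph{LinKnot} computation) for equality of the signed were-sets, and use the invariant $\mathcal{I}$ --- with the observation that equivalences must preserve the cyclic ordering of chord endpoints --- to certify that $P_1$ and $P_2$ are inequivalent pseudoknots. The paper's proof is exactly this combination of Theorem~\ref{flype_thm} with the computation of $\mathcal{I}(P_1)$ and $\mathcal{I}(P_2)$ in Figure~\ref{flypegauss}, so no further commentary is needed.
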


\subsection{The Effect of a Shadow Flype \& Examples}

We just saw for a specific example how the shadow flype affects a pseudodiagram, and we used the Gauss-diagrammatic $\mathcal{I}$ invariant to illustrate that this transformation changed the pseudoknot type. We'd like to explore these effects more generally. How does a shadow flype transform the Gauss diagram of a pseudodiagram? Secondly, how can the pair of examples in Figure~\ref{counterexample} be generalized to an infinite family of examples?

In~\cite{Soulie}, Souli\'{e} determines the effect on a Gauss diagram of performing a flype move on an ordinary knot. We can immediately derive from his work the effect of a shadow flype on the Gauss diagram $G$ of a pseudoknot $P$. There are two possibilities for how the Gauss diagram of a pseudoknot might be affected, illustrated in Figure~\ref{flypechord}. These diagrams should be interpreted as follows: in each horizontal (vertical) shaded region, there may be chords whose endpoints both lie within that region. Arrows are omitted here since the direction of precrossing arrows does not affect the value of $\mathcal{I}(G)$. Thus, we will limit our focus to considering shadow flypes on the level of chord diagrams.

\begin{figure}[htbp]
\begin{center}
\includegraphics[height=1.5in]{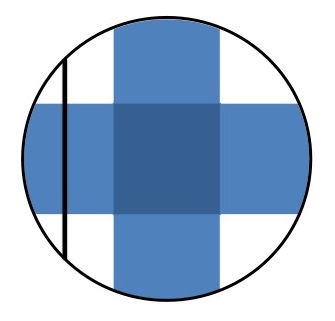}\includegraphics[height=1.5in]{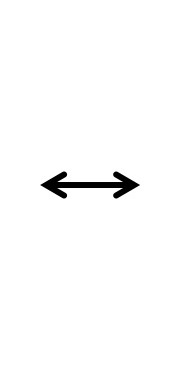}\includegraphics[height=1.5in]{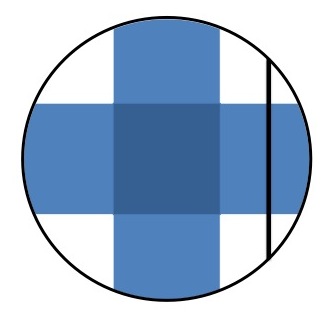}\\
Type I\\
\includegraphics[height=1.5in]{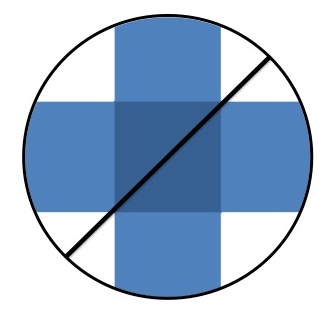}\includegraphics[height=1.5in]{arrow.jpeg}\includegraphics[height=1.5in]{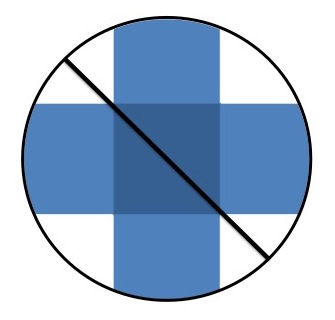}\\
Type II
\caption{{\bf Two types of flype effects on Gauss diagrams.}}
\label{flypechord}
\end{center}
\end{figure}

To understand how the diagrams in Figure~\ref{flypechord} apply to an example, we provide the chord diagrams for shadows $P_1$ and $P_2$ in Figure~\ref{flypegauss2}. As we can see, this pair of examples illustrates a Type II flype. It is not difficult to prove that this pair of examples is the smallest counterexample to the signed were-set conjecture that can be obtained from a shadow flype. This follows from two facts: (1) in order for a chord diagram to be obtainable from a classical pseudoknot, each chord must intersect an even number of chords, and (2) any two chord diagrams that are related by a planar isotopy are equivalent.

\begin{figure}[htbp]
\begin{center}
\includegraphics[height=1.5in]{PreflypeShadow.jpg}\hspace{.3in}\includegraphics[height=1.5in]{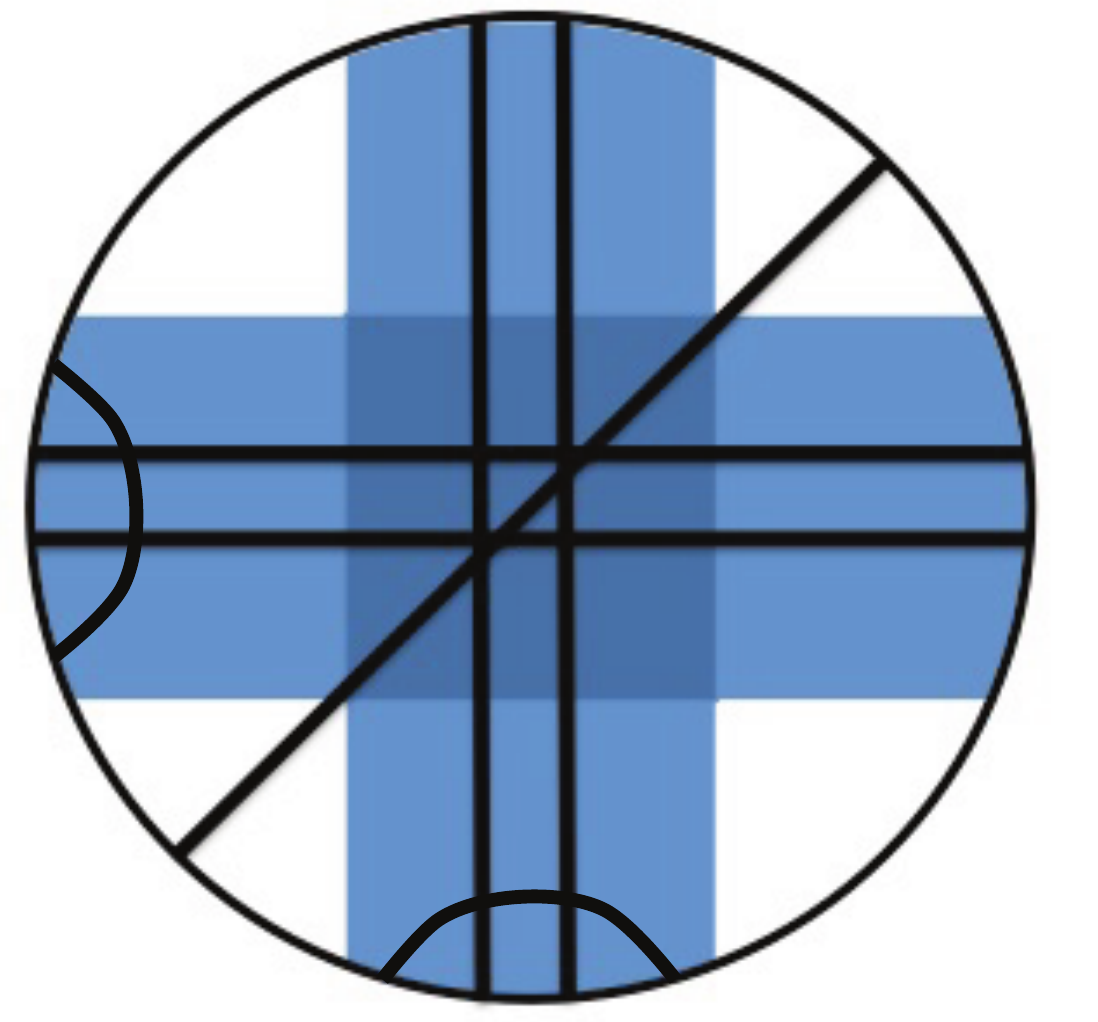}\\
\bigskip

\includegraphics[height=1.4in]{PostflypeShadow.jpg}\hspace{.25in}\includegraphics[height=1.5in]{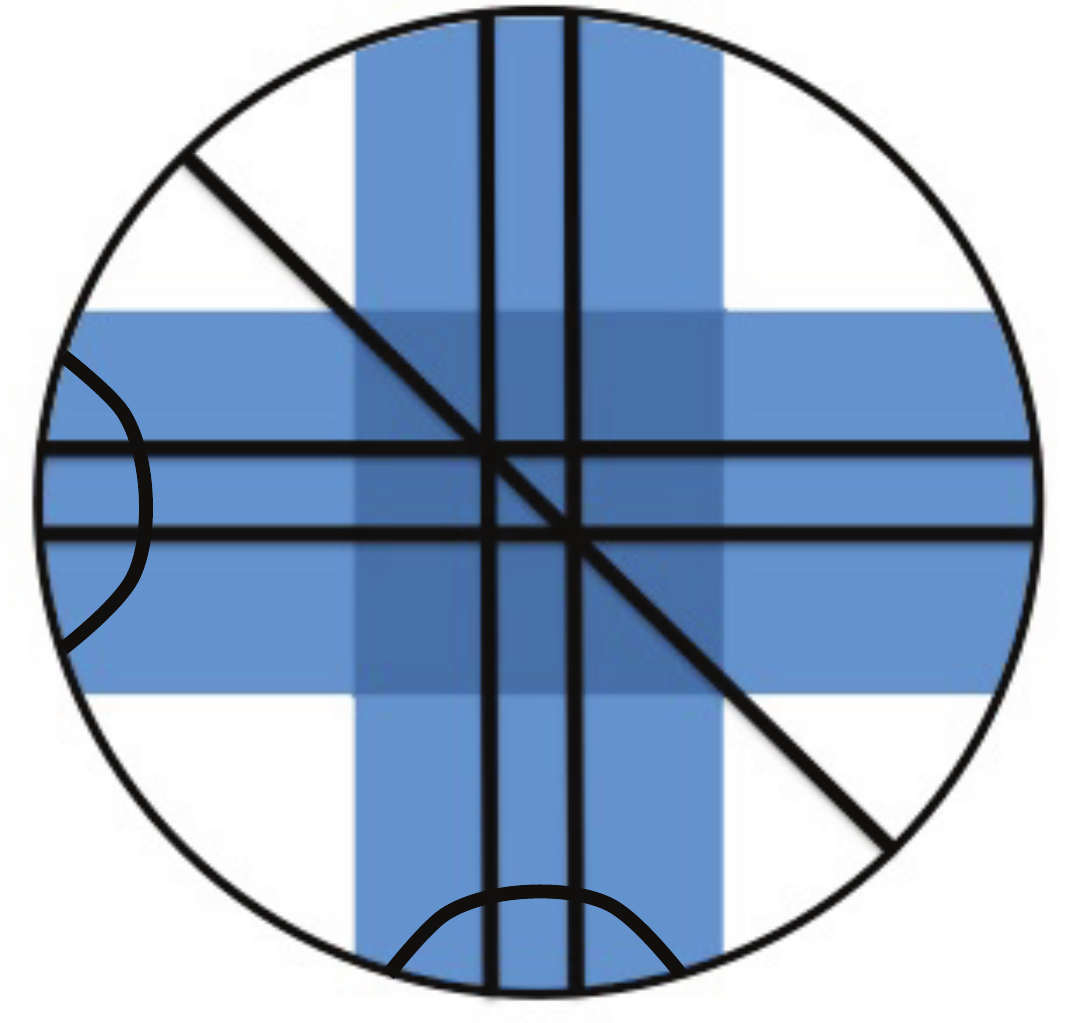}
\caption{{\bf Shadows $P_1$ and $P_2$ together with their Gauss diagrams.}}
\label{flypegauss2}
\end{center}
\end{figure}

Finally, let us observe that the $P_1$, $P_2$ counterexample can be generalized to an infinite family of counterexamples, illustrated in Figure~\ref{family}. Note that both $m$ and $n$ must be even for these pseudoknots to be non-virtual. The values of $\mathcal{I}$ corresponding to these pseudoknots can be obtained from the chord diagrams shown simply by decorating each chord with a 0.

\begin{figure}[htbp]
\begin{center}
\includegraphics[height=1.5in]{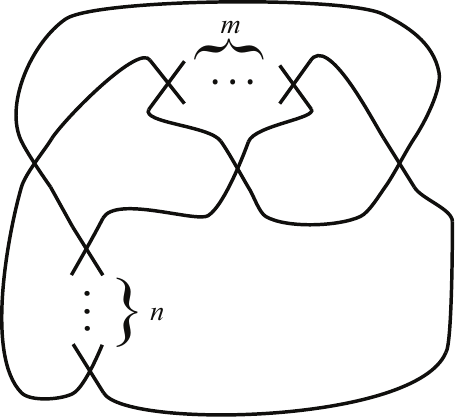}\hspace{.5in}\includegraphics[height=1.5in]{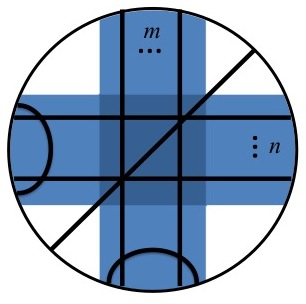}\\
\bigskip

\includegraphics[height=1.5in]{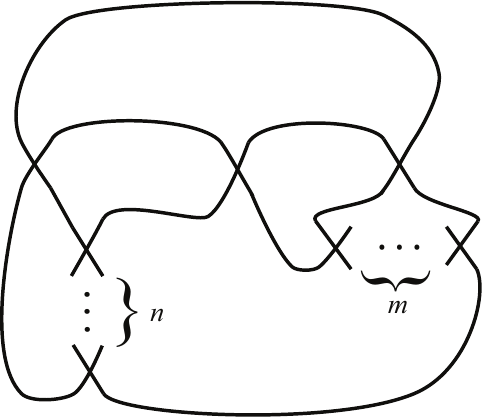}\hspace{.4in}\includegraphics[height=1.5in]{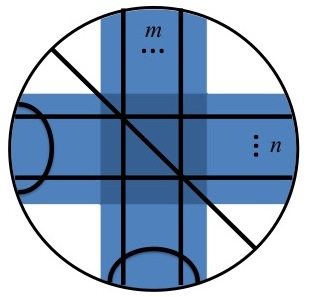}
\caption{{\bf An infinite family of pairs of counterexamples.}}
\label{family}
\end{center}
\end{figure}

\subsection{Related Combinatorial Questions}

The one open question related to this work that we are most interested in considering in the future is how many {\em non-equivalent} shadows can be obtained from a given shadow by shadow-flyping. Symmetries within chord diagrams may have the effect of producing equivalent pre- and post-flype diagrams, so not every possible flype move produces a new pseudoknot. This is an area for future exploration.


\end{document}